\newtheorem{theorem}{Theorem}
\newtheorem{cor}{Corollary}
\newtheorem{proposition}{Proposition}
\newtheorem{lemma}{Lemma}
\begin{document}

\vspace*{30px}

\begin{center}\Large
\textbf{On Polynomial Identities for Recursive Sequences}
\bigskip\large

Ivica Martinjak and Iva Vrsaljko\\
Faculty of Science\\
University of Zagreb\\
Bijeni\v cka cesta 32, HR-10000 Zagreb\\
Croatia\\


\end{center}


\begin{abstract} 
In this paper we extend the notion of Melham sum to the Pell and Pell-Lucas sequences. While the proofs of general statements rely on the binomial theorem, we prove some spacial cases by the known Pell identities. We also give extensions of obtained expressions to the other recursive sequences.
\end{abstract}

\noindent {\bf Keywords:} recursive sequence, Pell equation, polynomial identity, Melham sum\\
\noindent {\bf AMS Mathematical Subject Classifications:} 11B39, 11B37

\section{Introduction}

The Pell sequence $(P_n   )_{n \ge 0}$ and the Pell-Lucas sequence $(Q_n )_{n \ge 0}$ are defined as the second order recurrences,
\begin{eqnarray}
&P_{n+2}=2P_{n+1}+P_{n},&  P_0=0, \enspace P_1=1\\
&Q_{n+2}=2Q_{n+1}+Q_{n},&  Q_0=2, \enspace Q_1=2.
\end{eqnarray}
Equivalently, these sequences can be defined as the solutions of Diophantine equations $$x^2-dy^2= \pm 1$$ for $d=2$. More precisely, the pairs $(Q_n/2, P_n)$ are all solutions of these equations. The $n$-th term of the Pell sequence can also be expressed by the closed form equation.
The Pell-Lucas sequence is sometimes called {\it companion Pell sequence} and there is also similar closed form for this sequence. 
 We let $\gamma$ denote the silver ratio, $\gamma:=1+\sqrt{2}$ and we set $\delta:=1-\sqrt{2}$. Then the closed formula for Pell sequence can be written as
\begin{eqnarray} \label{BinetForPell}
P_n&=& \frac{\gamma^n - \delta^n}{\gamma-\delta} 
\end{eqnarray}
while for the companion Pell numbers we have $Q_n=  {\gamma^n - \delta^n} $.
There are many known properties and identities for these sequences \cite{Bravo, Duje, Wloch}. This includes several identities encountering both of the sequences, 
\begin{eqnarray}
Q_n=P_{n-1}+ P_{n+1}
\end{eqnarray}
being the basic one. Recall that the Cassini identity \cite{WeZe} for Pell numbers has form
\begin{eqnarray} \label{CassiniId}
P_{n-1}P_{n+1}- P_n^2 = (-1)^n.
\end{eqnarray}
An elegant proof include the fact that 
			\begin{eqnarray*}\label{Pellmatriceprop}
				\begin{pmatrix}
					0 &1\\
					1 &2
				\end{pmatrix}^ n = 
				\begin{pmatrix}
					P_{n-1} & P_n\\
					P_n & P_{n+1}
				\end{pmatrix},
			\end{eqnarray*}
which can be proved by induction. When applying the Cauchy-Binet theorem for determinants, the statement follows immediately. We will also use relation
\begin{eqnarray} \label{MplusNid}
P_{m+n} = P_{m-1}P_n + P_mP_{n+1},
\end{eqnarray}
for the purpose to prove some polynomial identities for Pell numbers. Identity (\ref{MplusNid}) can be proved by induction.

In what follows, firstly we prove that $(2m+1)n$-th Pell number is represented as a polynomial in $P_n$. Then we extend the notion of Melham sum \cite{Ulat} to the Pell and Pell-Lucas sequences and find related expansions into the power series of $P_n$, where exponents are odd. Finally, we give extensions of the obtained identities for a certain, more general, class of recursive sequences.

\section{The ${(2m+1)n}$-th Pell number as a polynomial in $P_n$}

\begin{proposition} For the Pell sequence $( P_n )_{n \ge 0}$ we have
\begin{eqnarray} \label{idPoli3n}
i) & P_{3n} =& 8P_n^3 + 3 (-1)^n P_n\\ 
ii) & P_{5n} =& 64P_n^5 + 40(-1)^nP_n^3 +5P_n. \label{idPoli5n}
\end{eqnarray}
\end{proposition}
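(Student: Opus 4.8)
The plan is to prove both identities by a uniform method using the Binet-type closed form (\ref{BinetForPell}), which is the cleanest route and avoids the case-chasing that the induction-based identity (\ref{MplusNid}) would require.

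First I would set $x := \gamma^n$ and $y := \delta^n$, so that by (\ref{BinetForPell}) we have $P_n = (x-y)/(\gamma-\delta)$ and $P_{kn} = (x^k - y^k)/(\gamma-\delta)$ for every positive integer $k$. The two crucial auxiliary facts are that $xy = (\gamma\delta)^n = (-1)^n$ (since $\gamma\delta = (1+\sqrt2)(1-\sqrt2) = -1$) and that $\gamma - \delta = 2\sqrt2$, hence $(\gamma-\delta)^2 = 8$. The idea is to expand $x^k - y^k$ as a polynomial in the two symmetric quantities $s := x - y$ and $p := xy$, because $P_{kn}$ and $P_n$ are governed precisely by $s$ while the sign term $(-1)^n$ is exactly $p$.

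For part $i)$ I would start from the algebraic identity $x^3 - y^3 = (x-y)^3 + 3xy(x-y) = s^3 + 3ps$. Dividing by $\gamma-\delta$ gives
\begin{eqnarray*}
P_{3n} = \frac{s^3 + 3ps}{\gamma-\delta} = \frac{s^3}{\gamma-\delta} + 3p\,\frac{s}{\gamma-\delta}.
\end{eqnarray*}
Now $s/(\gamma-\delta) = P_n$ and $s^3/(\gamma-\delta) = (\gamma-\delta)^2 \bigl(s/(\gamma-\delta)\bigr)^3 = 8 P_n^3$, while $p = (-1)^n$, which yields exactly $P_{3n} = 8P_n^3 + 3(-1)^n P_n$. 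For part $ii)$ the same engine applies with the identity $x^5 - y^5 = s^5 + 5ps^3 + 5p^2 s$; dividing by $\gamma-\delta$ and substituting $s^5/(\gamma-\delta) = 64 P_n^5$, $s^3/(\gamma-\delta) = 8P_n^3$, $p = (-1)^n$, and $p^2 = 1$ produces $64P_n^5 + 40(-1)^n P_n^3 + 5P_n$.

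The main obstacle is purely bookkeeping rather than conceptual: one must correctly expand $x^k - y^k$ in terms of $s = x-y$ and $p = xy$ for the odd powers and track the factors of $(\gamma-\delta)^2 = 8$ introduced each time an odd power of $s$ is converted back into a power of $P_n$. The cleanest way to organize this is to observe that $s^{2j+1}/(\gamma-\delta) = 8^j P_n^{2j+1}$, so each monomial $p^i s^{2j+1}$ in the expansion of $x^k - y^k$ contributes $8^j (-1)^{ni} P_n^{2j+1}$; verifying the coefficients $8, 24, \ldots$ match those in (\ref{idPoli3n})–(\ref{idPoli5n}) after collecting $(-1)^n$ and its even powers is the one step where an arithmetic slip is most likely. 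As an alternative that the excerpt invites, one could instead derive $i)$ directly from (\ref{MplusNid}) by writing $P_{3n} = P_{2n-1}P_n + P_{2n}P_{n+1}$ and reducing via the Cassini identity (\ref{CassiniId}), but the Binet approach scales uniformly to both cases and is what I would present.
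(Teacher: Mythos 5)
Your proof is correct, but it takes a different route from the one the paper gives for this Proposition. The paper argues entirely inside the integers: it writes $P_{3n}=P_{2n+n}=P_{2n-1}P_n+P_{2n}P_{n+1}$ using the addition formula (\ref{MplusNid}), then repeatedly applies the Cassini identity (\ref{CassiniId}) to collapse the mixed products $P_{n-1}P_{n+1}$, $P_{n-1}^2$, $P_nP_{n+1}$ into powers of $P_n$ plus signs, arriving at $P_n(8P_n^2+3(-1)^n)$; the $P_{5n}$ case is asserted to follow by the same manipulations. You instead pass to the Binet form, set $s=\gamma^n-\delta^n$ and $p=(\gamma\delta)^n=(-1)^n$, and use the symmetric-function identities $x^3-y^3=s^3+3ps$ and $x^5-y^5=s^5+5ps^3+5p^2s$ together with $s^{2j+1}/(\gamma-\delta)=8^jP_n^{2j+1}$; I checked both expansions and the resulting coefficients $8,3$ and $64,40,5$, and they are right. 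Your method is essentially the $m=1,2$ specialization of the paper's own proof of Theorem \ref{Jennings}, so it buys uniformity (both cases, and indeed (\ref{idPoli7n}) and (\ref{idPoli9n}), fall out of one computation) at the cost of invoking the irrational closed form; the paper's identity-based argument is more elementary and self-contained but requires a separate, error-prone reduction for each exponent, which is presumably why the authors only carry it out for $P_{3n}$.
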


\begin{proof} According to relations (\ref{CassiniId}) and (\ref{MplusNid}) we get
\begin{eqnarray*}
P_{3n} &=& P_{2n+n} = P_{2n-1}P_n+P_{2n}P_{n+1}\\
&=& P_{n-1}^2P_n+P_n^3 + (P_n-1P_n + P_nP_{n+1})(2P_n + P_{n-1})\\
&=& P_n(P_{n-1}^2 + 2P_nP_{n-1} +  P_n^2 + 2P_nP_{n+1} + P_{n-1}^2 + P_n^2 + (-1)^n )\\
&=&  P_n(3P_n^2 + 2(-1)^2 + 2 P_nP_{n+1} + P_{n-1}^2)\\
&=& P_n(3P_n^2 + 2(-1)^2 +  4P_n^2 + P_{n-1}(2P_n+P_{n-1})    )\\
&=& P_n(8P_n^2 + 3(-1)^n).
\end{eqnarray*}
Application of the same relations also proves identities for $P_{5n}$.
\end{proof}
Furthermore, for the next instance when $n$ is odd we have
\begin{eqnarray} \label{idPoli7n}
P_{7n}= 512{P_n}^7-448{P_n}^5+112{P_n}^3-7P_n
\end{eqnarray}
while all coefficients are positive when $n$ is even. 

\begin{theorem} \label{Jennings} 
 For the Pell sequence $( P_n )_{n \ge 0}$ 
\begin{eqnarray} \label{JenningsRelation}
P_{(2m+1)n} = \sum_{i=0}^m (-1)^{n(m+i)} 2^{3i} \frac{2m+1}{2i+1} \binom{m+i}{2i} P_{n}^{2i+1}.
\end{eqnarray} 
\end{theorem}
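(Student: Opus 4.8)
The plan is to reduce the claim to a polynomial identity in the two quantities $\gamma^n$ and $\delta^n$ and then to extract the stated coefficients by the binomial theorem. First I would invoke the Binet formula (\ref{BinetForPell}) and set $x:=\gamma^n$ and $y:=\delta^n$, so that $P_{(2m+1)n}=(x^{2m+1}-y^{2m+1})/(\gamma-\delta)$ while $P_n=(x-y)/(\gamma-\delta)$. The two structural facts I will lean on are $xy=(\gamma\delta)^n=(-1)^n$ (since $\gamma\delta=-1$) and $(x-y)^{2i}=((\gamma-\delta)P_n)^{2i}=8^iP_n^{2i}=2^{3i}P_n^{2i}$ (since $(\gamma-\delta)^2=8$). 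With these, it suffices to establish the symmetric identity $\frac{x^{2m+1}-y^{2m+1}}{x-y}=\sum_{i=0}^m \frac{2m+1}{2i+1}\binom{m+i}{2i}(xy)^{m-i}(x-y)^{2i}$, because multiplying by $P_n$ and substituting the two facts turns $(xy)^{m-i}$ into $(-1)^{n(m-i)}=(-1)^{n(m+i)}$ and $(x-y)^{2i}$ into $2^{3i}P_n^{2i}$, reproducing (\ref{JenningsRelation}) term by term.

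To reach that identity through the binomial theorem, I would write $\gamma^n=\tfrac12(Q_n+(\gamma-\delta)P_n)$ and $\delta^n=\tfrac12(Q_n-(\gamma-\delta)P_n)$, using $Q_n=\gamma^n+\delta^n$ and $\gamma^n-\delta^n=(\gamma-\delta)P_n$. Expanding $\gamma^{(2m+1)n}-\delta^{(2m+1)n}$ and subtracting, the even binomial terms cancel and only the odd ones survive; after dividing by $\gamma-\delta$ and using $(\gamma-\delta)^{2i}=2^{3i}$ this gives the intermediate expansion $P_{(2m+1)n}=2^{-2m}\sum_{i=0}^m \binom{2m+1}{2i+1}2^{3i}Q_n^{2m-2i}P_n^{2i+1}$. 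The companion relation $Q_n^2=8P_n^2+4(-1)^n$ (immediate from the Binet forms) then lets me replace each $Q_n^{2(m-i)}$ by $(8P_n^2+4(-1)^n)^{m-i}$ and expand a second time.

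After this second expansion I would collect the coefficient of a fixed odd power $P_n^{2j+1}$, indexing so that $j$ is the total degree. The powers of two telescope, and the coefficient reduces to $(-1)^{n(m-j)}2^{j}\sum_{i=0}^{j}\binom{2m+1}{2i+1}\binom{m-i}{j-i}$. Matching this against the claimed coefficient therefore hinges on the single binomial identity $\sum_{i=0}^{j}\binom{2m+1}{2i+1}\binom{m-i}{j-i}=4^{j}\,\frac{2m+1}{2j+1}\binom{m+j}{2j}$, and I expect this summation to be the main obstacle: the two binomial factors do not combine by a bare Vandermonde convolution, so I would prove it either by rewriting it as a terminating balanced hypergeometric summation or, more elementarily, by induction on $m$.

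As a cleaner alternative that sidesteps the double sum, I would instead prove the symmetric identity directly by induction on $m$ using the three-term recurrence $S_m=(x^2+y^2)S_{m-1}-(xy)^2S_{m-2}$ for $S_m:=(x^{2m+1}-y^{2m+1})/(x-y)$, where $x^2+y^2=(x-y)^2+2xy$. In the variables $u=(x-y)^2$ and $p=xy$ this reads $S_m=(u+2p)S_{m-1}-p^2S_{m-2}$, and comparing coefficients of $p^{m-i}u^i$ collapses the whole theorem into the single recurrence $c_{m,i}=c_{m-1,i-1}+2c_{m-1,i}-c_{m-2,i}$ for $c_{m,i}=\frac{2m+1}{2i+1}\binom{m+i}{2i}$. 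Checking that the stated coefficients satisfy this recurrence, together with the base cases $m=0,1$, is again the crux, but it is a bounded factorial manipulation rather than a genuine summation, which is why I would prefer this route for keeping the coefficients under control.
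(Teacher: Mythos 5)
Your proposal is correct, but it takes a genuinely different route from the paper. The paper does not prove the underlying polynomial identity at all: it quotes Jennings' two lemmas (\ref{JenningsLemma1})--(\ref{JenningsLemma2}) as known results, writes $P_{(2m+1)n}/P_n$ as the full homogeneous sum $x^{2m}+x^{2m-1}y+\cdots+y^{2m}$ with $x=\gamma^n$, $y=(-1)^n/x$, and then splits into cases according to the parities of $n$ and the residue of $2m+1$ modulo $4$ before substituting $\bigl(x\pm\tfrac{1}{x}\bigr)^2=8P_n^2$. You instead keep $xy=(-1)^n$ symbolic and reduce everything to the single symmetric identity $\frac{x^{2m+1}-y^{2m+1}}{x-y}=\sum_{i=0}^m\frac{2m+1}{2i+1}\binom{m+i}{2i}(xy)^{m-i}(x-y)^{2i}$, which eliminates the paper's case analysis entirely (note $(-1)^{n(m-i)}=(-1)^{n(m+i)}$, as you use). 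Your two proposed proofs of that identity are both viable and self-contained where the paper relies on an external citation: I checked that your intermediate expansion $P_{(2m+1)n}=2^{-2m}\sum_i\binom{2m+1}{2i+1}2^{3i}Q_n^{2m-2i}P_n^{2i+1}$, the resulting summation $\sum_{i=0}^{j}\binom{2m+1}{2i+1}\binom{m-i}{j-i}=4^{j}\frac{2m+1}{2j+1}\binom{m+j}{2j}$, and the coefficient recurrence $c_{m,i}=c_{m-1,i-1}+2c_{m-1,i}-c_{m-2,i}$ for $c_{m,i}=\frac{2m+1}{2i+1}\binom{m+i}{2i}$ are all true, so either closing move succeeds; the recurrence route is indeed the cleaner one, as it reduces to a two-line polynomial identity in $m$ and $i$ rather than a convolution. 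The coefficient $\frac{2m+1}{2i+1}\binom{m+i}{2i}$ you target is identical to the paper's $\frac{2m+1}{m+i+1}\binom{m+i+1}{2i+1}$, so no discrepancy arises there. The only thing separating your write-up from a complete proof is actually carrying out one of the two deferred verifications, both of which are routine.
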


\begin{proof}
We use equalities (\ref{JenningsLemma1}) and (\ref{JenningsLemma2}), which are results of D. Jennings available in \cite{Jenn} and which can be proved by induction. 

\begin{eqnarray}
\begin{aligned} \label{JenningsLemma1}
\bigg(x^{2m} + \frac{1}{x^{2m}} \bigg) + \bigg(x^{2m-2} + \frac{1}{x^{2m-2}} \bigg)+ \cdots + \bigg(x^{2} + \frac{1}{x^{2}}\bigg) + 1\\
=\sum_{i=0}^m \frac{2m+1}{m+i+1} \binom{m+i+1}{2i+1} \bigg ( x - \frac{1}{x} \bigg)^{2i}
\end{aligned}
\end{eqnarray}

\begin{eqnarray}
\begin{aligned} \label{JenningsLemma2}
\bigg(x^{2m} + \frac{1}{x^{2m}} \bigg) - \bigg(x^{2m-2} + \frac{1}{x^{2m-2}} \bigg)+ \cdots + (-1)^{m+1} \bigg(x^{2} + \frac{1}{x^{2}}\bigg) + (-1)^m\\
=\sum_{i=0}^m (-1)^{m+i} \frac{2m+1}{m+i+1} \binom{m+i+1}{2i+1} \bigg ( x + \frac{1}{x} \bigg)^{2i}
\end{aligned}
\end{eqnarray}

Having in mind Binet formula for the Pell numbers (\ref{BinetForPell}) and the fact that 
\begin{eqnarray} \label{gammaTimesDelta}
\gamma \cdot \delta &=& -1\\ 
\gamma - \delta&=& 2\sqrt{2} \label{gammaPlusDelta}
\end{eqnarray}
we have
\begin{eqnarray} \label{ratioPpnPn}
\frac{P_{pn} }{P_n} = \frac{\gamma^{pn} - \delta^{pn} }{\gamma^{n}-\delta^n} = x^{p-1}+x^{p-2}y+ \cdots + xy^{p-2} + y^{p-1},
\end{eqnarray}
where $x=\gamma^n$ and $y=\delta^{n}=\frac{(-1)^n}{x}$. When $p$ is odd, the r.h.s. of (\ref{ratioPpnPn}) reduces to 
\begin{eqnarray}
\bigg(x^{p-1} + \frac{1}{x^{p-1}} \bigg) + (-1)^n \bigg(x^{p-1} + \frac{1}{x^{p-1}} \bigg) + \cdots + \bigg(x^{p-1} + \frac{1}{x^{p-1}} \bigg)  + (-1)^n \label{caseA}
\end{eqnarray}
when $p \equiv 3 \pmod {4}$ or to
\begin{eqnarray}
\bigg(x^{p-1} + \frac{1}{x^{p-1}} \bigg) + (-1)^n \bigg(x^{p-1} + \frac{1}{x^{p-1}} \bigg) + \cdots + (-1)^n \bigg(x^{p-1} + \frac{1}{x^{p-1}} \bigg)  + 1 \label{caseB}
\end{eqnarray}
 when $p \equiv 1 \pmod {4}$. Now, we have
$$
x + \frac{1}{x} = \gamma^n + \frac{1}{\gamma^n} = \gamma^n + (-1)^n \delta^n
$$
which gives 
\begin{eqnarray}
x+ \frac{1}{x} &=& (\gamma - \delta) P_n, \enspace n \equiv 1 \pmod {2}\\
x- \frac{1}{x} &= &(\gamma - \delta) P_n, \enspace n \equiv 0 \pmod {2}
\end{eqnarray}
and furthermore
\begin{eqnarray}
\bigg( x+ \frac{1}{x} \bigg )^2 &=& 8 P_n^2, \enspace n \equiv 1 \pmod {2}\\
\bigg( x- \frac{1}{x} \bigg)^2 &= & 8 P_n^2, \enspace n \equiv 0 \pmod {2}
\end{eqnarray}
Since we get expression (\ref{caseA}) assuming that $p$ is odd we now substitute $p=2m+1$. Now, when $n$ is even we obtain all positive terms in (\ref{caseA}) and then r.h.s. of (\ref{ratioPpnPn}) is equal to the l.h.s. of equality (\ref{JenningsLemma1}), 
\begin{eqnarray} \label{JenningsRelationFinal}
P_{(2m+1)n} = \sum_{i=0}^m (-1)^{n(m+i)} 2^{3i} \frac{2m+1}{m+ i+1} \binom{m+i +1}{2i+1} P_{n}^{2i+1}.
\end{eqnarray} 
Analogue reasoning when $n$ is odd gives the same relation, thus (\ref{JenningsRelationFinal}) holds true for any natural number $n$. Finally, a simple manipulation with (\ref{JenningsRelationFinal}) leads to the final form of the theorem.
\end{proof}

One can easily see that relations (\ref{idPoli3n}), (\ref{idPoli5n}) and (\ref{idPoli7n}) appear from Theorem \ref{Jennings} for $m=$1,2 and 3, respectively. When $m=4$ Theorem \ref{Jennings} gives
\begin{eqnarray} \label{idPoli9n}
P_{9n}= 2^{12}{P_n}^9  -9 \cdot 2^9{P_n}^7 + 1728{P_n}^5 -240{P_n}^3 + 9P_n
\end{eqnarray}
when $n$ is odd while all coefficients are positive otherwise. Note that the leading coefficient in (\ref{JenningsRelation}) is always a power of 2, $2^{3m}$, while the absolute value of the coefficient in the term of the smallest degree is $2m+1$.

\section{Melham sum for the Pell and Pell-Lucas sequence}

\begin{proposition}
Twice the sum of the Pell numbers having even indexes from 2 to n is equal to the (2n+1)-st Pell number diminished by 1,
\begin{eqnarray} \label{MelhamTrivial}
1+ 2 \sum_{k=1}^n P_{2k}=P_{2n+1}.
\end{eqnarray}
\begin{proof}
The statement follows immediately from defining properties of Pell sequence,
\begin{eqnarray*}
P_{2n+1}&=& 2P_{2n}+P_{2n-1}\\
&=&2P_{2n}+2P_{2n-2}+P_{2n-3}\\
&=&2P_{2n}+2P_{2n-2}+ \cdots + 2P_2+P_1.
\end{eqnarray*}
\end{proof}
\end{proposition}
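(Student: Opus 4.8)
The plan is to prove (\ref{MelhamTrivial}) by induction on $n$, which isolates the single use of the Pell recurrence that drives everything. First I would check the base case $n=1$ directly: from $P_0=0$, $P_1=1$ and the defining recurrence one gets $P_2=2$ and $P_3=5$, so $1+2P_2 = 5 = P_3$. For the inductive step, assuming the identity for $n$, I would add the next term and use the recurrence in the form $P_{2n+3}=2P_{2n+2}+P_{2n+1}$:
\begin{eqnarray*}
1 + 2\sum_{k=1}^{n+1} P_{2k} = \Big(1 + 2\sum_{k=1}^{n} P_{2k}\Big) + 2P_{2n+2} = P_{2n+1} + 2P_{2n+2} = P_{2n+3},
\end{eqnarray*}
which is exactly the claim for $n+1$.

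A slicker alternative, closer to the author's unrolling, is to telescope. Writing the recurrence as $2P_{2k}=P_{2k+1}-P_{2k-1}$ turns the sum into a collapsing one:
\begin{eqnarray*}
2\sum_{k=1}^n P_{2k} = \sum_{k=1}^n \big(P_{2k+1}-P_{2k-1}\big) = P_{2n+1}-P_1 = P_{2n+1}-1,
\end{eqnarray*}
and moving the $1$ to the left-hand side gives (\ref{MelhamTrivial}) in a single line.

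Because the statement is elementary, there is no genuine obstacle here; the only thing needing care is index bookkeeping --- confirming that the telescope's surviving endpoints are $P_{2n+1}$ and $P_1=1$, and that the base case uses the correct small Pell values. A third route would expand the sum through the Binet formula (\ref{BinetForPell}) as two geometric series in $\gamma^2$ and $\delta^2$, simplifying via $\gamma^2-1=2\gamma$ and $\delta^2-1=2\delta$; this recovers the same answer but is heavier machinery than the result warrants.
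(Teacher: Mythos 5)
Your proof is correct, and your telescoping version is essentially the paper's own argument: the authors' repeated unrolling $P_{2n+1}=2P_{2n}+P_{2n-1}=\cdots=2P_{2n}+\cdots+2P_2+P_1$ is exactly your collapsing sum $\sum_{k=1}^n(P_{2k+1}-P_{2k-1})$ read in the other direction, and your induction is just the formal packaging of the same single use of the recurrence. Nothing further is needed.
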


Note that relation (\ref{MelhamTrivial}) can be seen as the expansion of the expression $Q_1 \sum_{k=1}^nP_{2k}$ into polynomial in $P_{2n+1}$,
\begin{eqnarray*}
Q_1 \sum_{k=1}^n P_{2k}=P_{2n+1} -1.
\end{eqnarray*}
In what follows we extend this idea to full generality. The expression
$$Q_1Q_2 \cdots Q_{2m+1} \sum_{k=1}^n P_{2k}^{2m+1},$$
we shall call the {\it Melham sum for Pell and Pell-Lucas sequences}, because there is analogy with established term for Fibonacci and Lucas sequences. More on the Fibonacci sequence one can find in \cite{Vajda}. Introduction to Fibonacci polynomials one can find in \cite{GKP}, and some recent development in \cite{ACMS}.

\begin{lemma}		\label{MSlema1}
For the sequences  $ (P_n)_{n \geq 0}$, $(Q_n)_{n \geq 0}$ and  $m \in \mathbb{N}$
\begin{eqnarray}
Q_m\sum_{k=1}^nP_{2mk}=P_{m(2n+1)}-P_m. 
\end{eqnarray}
\end{lemma}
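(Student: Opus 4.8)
The plan is to turn each summand $Q_m P_{2mk}$ into a difference of two Pell numbers so that the sum over $k$ telescopes. The key tool is the product-to-sum identity
\[
Q_m P_j = P_{j+m} + (-1)^m P_{j-m},
\]
which I would prove straight from the Binet form (\ref{BinetForPell}) together with $Q_j = \gamma^j + \delta^j$ and the relation $\gamma\delta = -1$ of (\ref{gammaTimesDelta}). Expanding $(\gamma^m+\delta^m)(\gamma^j-\delta^j)$, the extreme terms give $\gamma^{j+m}-\delta^{j+m}$, while the two cross terms combine via $\delta^m\gamma^j=(-1)^m\gamma^{j-m}$ and $\gamma^m\delta^j=(-1)^m\delta^{j-m}$ into $(-1)^m(\gamma^{j-m}-\delta^{j-m})$; dividing by $\gamma-\delta$ yields the identity. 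The same identity also follows from the addition formula (\ref{MplusNid}) and the basic relation $Q_m=P_{m-1}+P_{m+1}$, which keeps everything inside known Pell identities.

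Specializing to $j=2mk$ gives $Q_m P_{2mk}=P_{m(2k+1)}+(-1)^m P_{m(2k-1)}$. Summing over $k=1,\dots,n$ and factoring $Q_m$ out of the sum, the right-hand side becomes $\sum_{k=1}^n\big(P_{m(2k+1)}+(-1)^m P_{m(2k-1)}\big)$. When this sign is negative the series telescopes: the term $+P_{m(2k+1)}$ coming from index $k$ cancels the term $-P_{m(2k+1)}$ coming from index $k+1$, and only the outermost survivors remain, namely the top term $P_{m(2n+1)}$ and the bottom term $-P_{m}$. That is precisely the claimed value $P_{m(2n+1)}-P_m$.

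The step I would watch most carefully is the sign $(-1)^m$, because the cancellation above hinges on it equalling $-1$. Before trusting the telescoping I would check the parity bookkeeping on small cases and confirm whether the clean form as stated needs $m$ to be odd or whether the even case requires separate handling; I would also verify that the lower endpoint of the sum contributes exactly $P_m$ (and not $P_{-m}$ or $P_{3m}$), using (\ref{MplusNid}) or the Cassini identity (\ref{CassiniId}) as a sanity check on individual terms. A tidy alternative that isolates the same difficulty is induction on $n$: the base case $n=1$ is a single application of the product-to-sum identity, and the inductive step needs only $Q_m P_{2m(n+1)}=P_{m(2n+3)}-P_{m(2n+1)}$, i.e.\ the same identity with the telescoping sign. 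Either way, the whole lemma reduces to that one product-to-sum identity plus careful index and sign tracking.
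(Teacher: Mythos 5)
Your approach is essentially the paper's: the paper also derives the product-to-sum identity (in the form $P_{m+n}=P_mQ_n-(-1)^nP_{m-n}$, i.e.\ $Q_nP_m=P_{m+n}+(-1)^nP_{m-n}$ with the roles of the two indices swapped) and then runs an induction on $n$ whose inductive step is exactly your telescoping cancellation, so the two arguments are the same computation written in different orders.

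The issue you flagged as needing care is a genuine one, and you are more careful here than the paper is. The sign $(-1)^m$ does not go away: for even $m$ the summand is $Q_mP_{2mk}=P_{m(2k+1)}+P_{m(2k-1)}$, nothing telescopes, and the lemma as stated is false. Concretely, for $m=2$ and $n=1$ one has $Q_2P_4=6\cdot 12=72$, while $P_6-P_2=70-2=68$. The paper's inductive step silently uses $Q_mP_{2m(n+1)}=P_{m(2n+3)}-P_{m(2n+1)}$, which is the product-to-sum identity with $(-1)^m$ replaced by $-1$, valid only for odd $m$. So your proof is correct and complete for odd $m$, and the answer to the question you left open is that the statement must be restricted to odd $m$ (the even case has no such closed form by telescoping). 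The restriction is harmless downstream: in Theorem \ref{MS1} the lemma is invoked with first index $2m+1-2j$, which is always odd.
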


\begin{proof}
By relation (\ref{MplusNid}) we have
\begin{eqnarray*}
P_{m+n}&=&(P_{m-n}-P_nP_{n-1}{(-1)}^n){(-1)}^{n+1}+P_mP_{n-1}\\	
&=&P_{m-1}{(-1)}^{n+1}+P_mP_{n-1}+P_mP_{n+1}\\
&=&P_{m-1}{(-1)}^{n+1}+P_m(P_{n-1}+P_{n+1})\\
&=&P_mQ_n - (-1)^nP_{m-n}.
\end{eqnarray*}
Now we prove the statement of lemma by induction where this result is used in a step of induction. Thus, from the fact that the statement holds true for $n=1$ we have to derive equality $Q_n\sum_{k=1}^{n+1}P_{2mk}=P_{m(2n+3)}-P_m$. We have
\begin{eqnarray*}
Q_n\sum_{k=1}^{n+1}P_{2mk}&=&Q_n \Big (\sum_{k=1}^{n}P_{2mk}+P_{2m(n+1)} \Big) \\
&=&P_{m(2n+1)}-P_m+Q_nP_{2m(n+1)}\\
&=&P_{m(2n+1)}+QP_{2m(n+1)}-P_m\\
&=&P_{2m(n+1)+m}-P_m\\
&=&P_{m(2n+3)}-P_m,
\end{eqnarray*}
which completes the statement of lemma.
\end{proof}

\begin{lemma}	\label{MSlema2}
For the sequences  $ (P_n)_{n \geq 0}$, $(Q_n)_{n \geq 0}$ and  $m \in \mathbb{N}$ 
\begin{eqnarray}
{P_n}^{2m+1}=\frac{1}{2^{3m}}\sum_{j=0}^m{(-1)}^{j(n+1)}\binom{2m+1}{j}P_{(2m+1-2j)n}.
\end{eqnarray}
\end{lemma}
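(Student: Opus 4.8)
The plan is to prove the identity directly from the Binet formula (\ref{BinetForPell}) together with the binomial theorem, reusing the machinery already developed in the proof of Theorem \ref{Jennings}. I would begin from
\begin{eqnarray*}
P_n^{2m+1} = \frac{(\gamma^n - \delta^n)^{2m+1}}{(\gamma-\delta)^{2m+1}},
\end{eqnarray*}
and expand the numerator by the binomial theorem as $\sum_{j=0}^{2m+1}\binom{2m+1}{j}(-1)^j \gamma^{n(2m+1-j)}\delta^{nj}$. The denominator is handled at the very end using (\ref{gammaPlusDelta}).

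The key structural observation is that, since $2m+1$ is odd, the $2m+2$ summands pair off as $j \leftrightarrow 2m+1-j$, with $j$ running from $0$ to $m$ and with no fixed middle term, so each index is counted exactly once. For each such pair I would factor out $(\gamma\delta)^{nj}$, which by (\ref{gammaTimesDelta}) equals $(-1)^{nj}$. The two paired terms then collapse to a multiple of $\gamma^{n(2m+1-2j)} - \delta^{n(2m+1-2j)} = (\gamma-\delta)P_{(2m+1-2j)n}$, again by the Binet formula. Collecting binomial coefficients via $\binom{2m+1}{2m+1-j} = \binom{2m+1}{j}$ and signs via $(-1)^j(-1)^{nj} = (-1)^{j(n+1)}$ yields
\begin{eqnarray*}
(\gamma^n - \delta^n)^{2m+1} = (\gamma-\delta)\sum_{j=0}^m (-1)^{j(n+1)}\binom{2m+1}{j}P_{(2m+1-2j)n}.
\end{eqnarray*}
Finally I would divide by $(\gamma-\delta)^{2m+1}$: one power of $\gamma-\delta$ cancels, leaving $(\gamma-\delta)^{2m} = (2\sqrt 2)^{2m} = 8^m = 2^{3m}$ in the denominator, which is exactly the claimed normalisation.

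I expect the main obstacle to be the sign bookkeeping inside the pairing. One must check carefully that $(-1)^{2m+1-j} = -(-1)^j$, so that the two paired terms carry opposite signs and therefore combine into the \emph{difference} $\gamma^{n(2m+1-2j)} - \delta^{n(2m+1-2j)}$ — producing the single factor $\gamma-\delta$ that lowers the exponent of the denominator — rather than into a sum. It is precisely this cancellation of one power of $\gamma-\delta$ that turns $2^{3m+1}\sqrt 2$ into the clean $2^{3m}$, so getting the parity of these signs right is what makes the stated closed form come out exactly. An alternative that avoids the Binet formula would be induction on $m$ via the addition relation (\ref{MplusNid}) and the Cassini identity (\ref{CassiniId}), but the pairing argument above is both shorter and consistent with the style of the preceding proofs.
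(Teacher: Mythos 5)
Your proposal is correct and follows essentially the same route as the paper: Binet formula, binomial expansion of $(\gamma^n-\delta^n)^{2m+1}$, pairing the $j$ and $2m+1-j$ terms, extracting $(\gamma\delta)^{nj}=(-1)^{nj}$, and cancelling one factor of $\gamma-\delta$ to leave $(\gamma-\delta)^{2m}=2^{3m}$. Your sign bookkeeping $(-1)^j(-1)^{nj}=(-1)^{j(n+1)}$ in fact matches the lemma's statement more cleanly than the exponent written in the last line of the paper's own derivation.
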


\begin{proof}
By means of binomial theorem and using (\ref{gammaTimesDelta}) as well as (\ref{gammaPlusDelta}) we have
\begin{eqnarray*}
{P_n}^{2m+1}&=&{\Big(\frac{\gamma^n-\delta^n}{\gamma-\delta}\Big)}^{2m+1}\\
&=&\frac{1}{{(\gamma-\delta)}^{2m+1}}\sum_{j=0}^{2m+1}{(-1)^{j+1}}\binom{2m+1}{j}\gamma^{jn}\delta^{(2m+1-j)n}\\
&=&\frac{1}{8^m(\gamma-\delta)}\sum_{j=0}^m{(-1)}^j\binom{2m+1}{j}(\gamma^{(2m+1-j)n}\delta^{jn}-\gamma^{jn}\delta^{(2m+1-j)n})\\
&=&\frac{1}{2^{3m}}\sum_{j=0}^m{(-1)}^j\binom{2m+1}{j}\gamma^{jn}\delta^{jn}(\frac{\gamma^{(2m+1-2j)n}-\delta^{(2m+1-2j)n}}{\gamma-\delta})\\
&=&\frac{1}{2^{3m}}\sum_{j=0}^m{(-1)}^{j(n+m)}P_{(2m+1-2j)n}
\end{eqnarray*}
which completes the statement of lemma.
\end{proof}

\begin{theorem} \label{MS1}
For $m \in \mathbb{N}$ and the sequences $(P_n)_{n \geq 0}$, $(Q_n)_{n \geq 0}$ 
\begin{eqnarray}
\sum_{k=1}^n{P_{2k}}^{2m+1}=\frac{1}{2^{3m}}\sum_{j=0}^{m}  \frac{ (-1)^j }{Q_{2m+1-2j}}  \binom{2m+1}{j}(P_{(2m+1-2j)(2n+1)}-P_{2m+1-2j}).	\label{MSum1}
\end{eqnarray}
\end{theorem}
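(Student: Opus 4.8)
The plan is to obtain the theorem by combining Lemma \ref{MSlema1} and Lemma \ref{MSlema2} directly, with no further induction required. The strategy is to first expand each summand $P_{2k}^{2m+1}$ as a linear combination of Pell numbers with \emph{even} arguments via Lemma \ref{MSlema2}, then interchange the order of the two finite summations, and finally collapse each resulting inner sum in closed form using Lemma \ref{MSlema1}.

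First I would apply Lemma \ref{MSlema2} with $n$ replaced by $2k$, giving
\begin{eqnarray*}
P_{2k}^{2m+1} = \frac{1}{2^{3m}}\sum_{j=0}^m (-1)^{j(2k+1)}\binom{2m+1}{j}P_{(2m+1-2j)(2k)}.
\end{eqnarray*}
The crucial observation is that the index is even: since $(-1)^{2kj}=1$, the sign $(-1)^{j(2k+1)}$ collapses to $(-1)^j$, which no longer depends on $k$. This is exactly what lets the sign be pulled outside the summation over $k$; had the argument been odd, the $k$-dependence of the sign would survive and the clean factorization would fail.

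Next I would sum over $k$ from $1$ to $n$ and interchange the two finite sums, obtaining
\begin{eqnarray*}
\sum_{k=1}^n P_{2k}^{2m+1} = \frac{1}{2^{3m}}\sum_{j=0}^m (-1)^{j}\binom{2m+1}{j}\sum_{k=1}^n P_{(2m+1-2j)(2k)}.
\end{eqnarray*}
For each fixed $j$, writing $M = 2m+1-2j$ we have $(2m+1-2j)(2k) = 2Mk$, so the inner sum is $\sum_{k=1}^n P_{2Mk}$, which is precisely the left-hand side of Lemma \ref{MSlema1} (up to the factor $Q_M$) with $m$ replaced by $M$. That lemma then yields $\sum_{k=1}^n P_{2Mk} = (P_{M(2n+1)} - P_M)/Q_M$. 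Substituting $M = 2m+1-2j$ back and inserting into the display above produces exactly (\ref{MSum1}).

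The computation is routine once the two lemmas are available, so the main point requiring care is the sign reduction $(-1)^{j(2k+1)} = (-1)^j$, which hinges on summing over the even indices $2k$ and is what permits the collapse into a $k$-independent coefficient. I would also check that the division by $Q_{2m+1-2j}$ is legitimate: for $0 \le j \le m$ the index $2m+1-2j$ is a positive odd integer, so $Q_{2m+1-2j} > 0$ throughout the range.
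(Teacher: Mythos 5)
Your proposal is correct and follows essentially the same route as the paper: substitute $n=2k$ in Lemma \ref{MSlema2}, sum over $k$, interchange the finite sums, and collapse the inner sum via Lemma \ref{MSlema1} with $m$ replaced by $2m+1-2j$. Your explicit justification of the sign reduction $(-1)^{j(2k+1)}=(-1)^j$ and of the division by $Q_{2m+1-2j}$ makes the argument slightly more careful than the paper's, but the substance is identical.
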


\begin{proof}
In Lemma \ref{MSlema2} we substitute $n=2k$ and then sum both sides of equality from $k=1$ through $n$. It follows
\begin{eqnarray*}
{P_{2k}}^{2m+1}=\frac{1}{2^{3m}}\sum_{j=0}^m{(-1)}^{j}\binom{2m+1}{j}\sum_{k=1}^nP_{(2m+1-2j)2k}.
\end{eqnarray*} 
When we substitute $\sum_{k=1}^nP_{(2m+1-2j)2k}$ by the expression in Lemma \ref{MSlema1}, the proof is completed.
\end{proof}

\begin{theorem}  \label{Melham}
For $m \in \mathbb{N}$ and sequences $(P_n)_{n \geq 0}$, $(Q_n)_{n \geq 0}$ 
\begin{eqnarray}
\begin{split}
\sum_{k=1}^n P_{2k}^{2m+1} = \sum_{i=0}^m P_{2n+1}^{2i+1} \sum_{j=0}^{m-i} \frac{ (-1)^{m+i} 2^{3(i-m)} (2m-2j+1)  }{Q_{2m+1-2j} (2i+1) } \binom{2m+1}{j} \binom{m-j+i}{2i}\\ + 												\sum_{j=0}^m \frac{ (-1)^{j+1}  P_{2m+1-2j}  }{ 2^{3m} Q_{2m+1-2j}  } \binom{2m+1}{j}. \label{MelhamId}
\end{split}
\end{eqnarray}
\end{theorem}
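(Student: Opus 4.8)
The plan is to combine the two main results already established, namely Theorem \ref{MS1} and Theorem \ref{Jennings}. Theorem \ref{MS1} expresses the left-hand side $\sum_{k=1}^n P_{2k}^{2m+1}$ as a linear combination of terms of the form $P_{(2m+1-2j)(2n+1)}$, together with a constant (that is, $n$-independent) part. The crucial observation is that in each such term the index is a product $p\cdot N$ where $p=2m+1-2j$ is odd and $N=2n+1$ is also odd; hence Theorem \ref{Jennings} applies and lets me rewrite $P_{p(2n+1)}$ as a polynomial in $P_{2n+1}$ with odd exponents. Substituting these expansions back into the expression coming from Theorem \ref{MS1} will turn the right-hand side into a polynomial in $P_{2n+1}$ plus a constant, which is precisely the shape of (\ref{MelhamId}).

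Concretely, I would write $p=2m+1-2j=2(m-j)+1$ and apply (\ref{JenningsRelation}) with its parameter $m$ replaced by $m-j$ and its variable $n$ replaced by $2n+1$. This yields
\begin{eqnarray*}
P_{(2m+1-2j)(2n+1)} = \sum_{i=0}^{m-j} (-1)^{(2n+1)((m-j)+i)} 2^{3i} \frac{2m+1-2j}{2i+1} \binom{m-j+i}{2i} P_{2n+1}^{2i+1}.
\end{eqnarray*}
Since $2n+1$ is odd, the power of $-1$ collapses to $(m-j)+i$ modulo $2$, so the sign becomes $(-1)^{(m-j)+i}$ and no longer depends on $n$; this parity reduction is exactly what forces the final answer to depend on $n$ only through $P_{2n+1}$. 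Inserting this into Theorem \ref{MS1} and consolidating, the prefactor sign $(-1)^j$ from Theorem \ref{MS1} merges with $(-1)^{(m-j)+i}$ to give $(-1)^{m+i}$, while the factor $1/2^{3m}$ merges with $2^{3i}$ to give $2^{3(i-m)}$.

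The remaining work is organizational. I would split the resulting double sum into the genuinely $P_{2n+1}$-dependent part and the constant part $-\tfrac{1}{2^{3m}}\sum_{j=0}^m \frac{(-1)^j}{Q_{2m+1-2j}}\binom{2m+1}{j}P_{2m+1-2j}$, the latter matching the second line of (\ref{MelhamId}) directly once $(-1)^j$ is written as $(-1)^{j+1}$ with the overall minus sign absorbed. For the first part I must interchange the order of summation: the original ranges $0\le j\le m$, $0\le i\le m-j$ describe the triangle $i+j\le m$, which I rewrite as $0\le i\le m$, $0\le j\le m-i$ so that $P_{2n+1}^{2i+1}$ can be pulled outside as the outer summation variable. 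The main obstacle is not conceptual but bookkeeping: keeping the signs, the powers of two, and the two binomial coefficients correctly aligned through the substitution and through the swap of summation order, and verifying that the factor $2m+1-2j=2m-2j+1$ together with the denominators $Q_{2m+1-2j}(2i+1)$ land exactly as written in (\ref{MelhamId}). Once the interchange is carried out and the factors are matched term by term, the identity follows.
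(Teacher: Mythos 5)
Your proposal is correct and follows exactly the paper's own route: substitute $m\mapsto m-j$, $n\mapsto 2n+1$ in Theorem \ref{Jennings}, insert the result into Theorem \ref{MS1}, and reorganize the double sum. Your sign and factor bookkeeping (collapsing $(-1)^{(2n+1)(m-j+i)}$ to $(-1)^{m-j+i}$, merging with $(-1)^j$ to get $(-1)^{m+i}$, and swapping the summation order over the triangle $i+j\le m$) is accurate and in fact spells out details the paper leaves implicit.
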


\begin{proof}
When substitute $m$ with $m-j$ and $n$ with $2n+1$ in Theorem \ref{Jennings} one get
\begin{eqnarray*}
P_{(2m+1-2j)(2n+1)}=\sum_{i=0}^{m-j} (-1)^{(2n+1)(m-j+i)} 2^{3i} \frac{2m-2j+1}{2i+1} \binom{m-j+i}{2i} P_{2n+1}^{2i+1}.
\end{eqnarray*}
We substitute this expression in Theorem \ref{MS1} and the statement follows immediately.
\end{proof}

Now we consider some particular cases of Theorem \ref{Melham}. When $m=1$ we obtain
\begin{eqnarray*}
\sum_{k=1}^n P_{2k}^3 = \frac{1}{14} \big (  P_{2n+1} ^3 - 3 P_{2n+1} +2 \big ).
\end{eqnarray*}
When multiply this relation with $Q_1Q_3$ we get polynomial identity for the Melham sum in case $m=1$
\begin{eqnarray}
Q_1Q_3 \sum_{k=1}^n P_{2k}^3 =2 P_{2n+1} ^3 - 6 P_{2n+1} +4.
\end{eqnarray}
The next case, when $m=2$ gives
\begin{eqnarray}
Q_1Q_3 Q_5 \sum_{k=1}^n P_{2k}^5 = 28 P_{2n+1} ^5 - 120 P_{2n+1}^ 3 + 220 P_{2n+1} -128.
\end{eqnarray}

\section{Further extensions}

Given $s,t \in \mathbb{N}$ and $n \in \mathbb{N}_0$ we define the second order recurrence with the relation
\begin{eqnarray} \label{secondOrderRec}
a_{n+2} = s a_{n-1} + t a_{n}
\end{eqnarray}
and initial values $a_0$ and $a_1$. We say that a sequence $(a_n)_{n \ge 0}$ is a solution of (\ref{secondOrderRec}) if its terms satisfies this recurrence. Here we consider a class of (\ref{secondOrderRec}) defined by $t=1$ and initial terms $a_0 =0$, $a_1=1$. We let $(A_n)_{n \ge 0}$ denote the sequence defined by this class. It is worth mentioning that two notable representatives of this class are Fibonacci and Pell numbers.

\begin{proposition} \label{2nplus1A}
For the sequence of numbers $(A_n)_{n \ge 0}$ we have
\begin{eqnarray}
i) & A_{3n} =& (s^2 + 4) A_n^{3} + 3(-1)^{n} A_n\\
ii) & A_{5n} =& (s^2 + 4)^2 A_n^{5} + 5(s^2+4) (-1)^{n} A_n^{3} + 5A_n.
\end{eqnarray}
\end{proposition}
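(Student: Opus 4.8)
The plan is to mirror the proof of Proposition \ref{2nplus1A}'s Pell prototype (the Proposition establishing (\ref{idPoli3n}) and (\ref{idPoli5n})), after first checking that the two structural identities that proof relied on survive the passage to the class $(A_n)_{n\ge 0}$. Here the generalized Binet formula $A_n=(\alpha^n-\beta^n)/(\alpha-\beta)$ holds, with $\alpha,\beta$ the roots of $x^2-sx-1=0$, so that $\alpha+\beta=s$, $\alpha\beta=-1$ and $(\alpha-\beta)^2=s^2+4$. The decisive observation is that $\alpha\beta=-1$ regardless of $s$, exactly as $\gamma\delta=-1$ in (\ref{gammaTimesDelta}); this is what forces the constant in the Cassini-type identity to be $(-1)^n$ itself rather than a nontrivial multiple, and hence what keeps the signs in the target identities clean.

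First I would record the two identities the Pell argument used, now for $(A_n)$. A short Binet computation gives $A_{n-1}A_{n+1}-A_n^2=-(\alpha\beta)^{n-1}=(-1)^n$, the verbatim analogue of (\ref{CassiniId}). The addition formula $A_{m+n}=A_{m-1}A_n+A_mA_{n+1}$, the analogue of (\ref{MplusNid}), follows from the matrix identity $\bigl(\begin{smallmatrix}0&1\\1&s\end{smallmatrix}\bigr)^{n}=\bigl(\begin{smallmatrix}A_{n-1}&A_n\\A_n&A_{n+1}\end{smallmatrix}\bigr)$, proved by induction and read off from $M^{m+n}=M^mM^n$ just as in the Introduction. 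The only change from the Pell case is that the recurrence now reads $A_{n+1}=sA_n+A_{n-1}$, with $s$ in place of the numeral $2$.

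With these in hand, part $i)$ proceeds exactly as for $P_{3n}$. Writing $A_{3n}=A_{2n-1}A_n+A_{2n}A_{n+1}$ and using $A_{2n-1}=A_{n-1}^2+A_n^2$ together with $A_{2n}=A_n(A_{n-1}+A_{n+1})$, one factors out $A_n$ and is left with the bracket $A_{n-1}^2+A_n^2+A_{n-1}A_{n+1}+A_{n+1}^2$. Substituting $A_{n+1}=sA_n+A_{n-1}$ and then eliminating $A_{n-1}^2$ via the Cassini-type identity in the form $A_{n-1}^2+sA_nA_{n-1}=A_n^2+(-1)^n$ collapses this bracket to $(s^2+4)A_n^2+3(-1)^n$, which yields $i)$. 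Each occurrence of the numeral $8$ in the Pell computation is thereby recognized as the instance $s^2+4$ of this expression, so the generalization is forced rather than coincidental.

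For part $ii)$ the same mechanism applies, starting from $A_{5n}=A_{4n+n}$ (or by iterating the addition formula), and I expect the only real difficulty to be the bookkeeping of this longer expansion; there is no conceptual obstacle, since the result can depend on $s$ only through $(\alpha-\beta)^2=s^2+4$ and on the parity of $n$ only through $\alpha\beta=-1$, so collecting terms must reproduce $(s^2+4)^2A_n^5+5(s^2+4)(-1)^nA_n^3+5A_n$. A cleaner alternative I would note is that $i)$ and $ii)$ are the cases $m=1,2$ of the $s$-analogue of Theorem \ref{Jennings}, obtained by rerunning its Jennings-identity argument with $(\gamma-\delta)^{2i}=2^{3i}$ replaced throughout by $(\alpha-\beta)^{2i}=(s^2+4)^i$; specializing the resulting formula $A_{(2m+1)n}=\sum_{i=0}^m(-1)^{n(m+i)}(s^2+4)^i\frac{2m+1}{2i+1}\binom{m+i}{2i}A_n^{2i+1}$ then delivers both identities at once.
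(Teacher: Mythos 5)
Your proposal is correct and follows essentially the same route as the paper: establish the Cassini-type identity $A_{n-1}A_{n+1}-A_n^2=(-1)^n$ and the addition formula $A_{m+n}=A_{m-1}A_n+A_mA_{n+1}$, decompose $A_{3n}=A_{2n-1}A_n+A_{2n}A_{n+1}$, and collapse the resulting bracket using Cassini in exactly the form $A_{n-1}^2+sA_nA_{n-1}=A_n^2+(-1)^n$, with part ii) handled by the analogous (longer) calculation. The only differences are cosmetic (Binet versus induction for the two lemmas, and your closing remark that both parts follow from the $s$-analogue of Theorem \ref{Jennings}, which the paper itself records afterwards as Corollary \ref{corJennings}).
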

\begin{proof}
By induction we prove that 
\begin{eqnarray} \label{CassiniA}
A_{n-1} A_{n+1} - A^2 = (-1)^2
\end{eqnarray}
and also 
\begin{eqnarray} \label{MplusNA}
A_{m+n} = A_{m-1}A_n + A_{m}A_{n+1}.
\end{eqnarray}

Now we employ (\ref{MplusNA}) to get 
\begin{eqnarray*}
A_{3n}&=& A_{2n+n} = A_{2n-1} A_n + A_{2n} A_{n+1}\\
 &=& A_{n-1}^2 A_n + A_n^3 + (A_{n-1}A_n  + A_nA_{n+1}) (A_2A_n + A_{n-1} ) \\
&=& A_n(A_{n-1}^2  + A_n^2 + s A_{n-1}A_n + s A_n A_{n+1} + A_{n-1}^2  + A_{n-1}A_{n+1}  ).
\end{eqnarray*}
Having in mind that 
\begin{eqnarray*}
 A_{n-1}^2 + sA_{n-1}A_n = A_n^2 + (-1)^n
\end{eqnarray*}
by (\ref{CassiniA}), we obtain
\begin{eqnarray*}
 A_{3n} = A_n( 2A_n^2 + 2(-1) ^n + A_n^2 + s A_n A_{n+1} + A_{n-1}^2 ).
\end{eqnarray*}
When applying again (\ref{CassiniA}) to the terms $s A_n A_{n+1}$ and $A_{n-1}^2$ we finally have
\begin{eqnarray*}
 A_{3n} &=& A_n[ 4A_n^2 + s^2 A_n^2  + 3(-1)^n ] \\
  &=& A_n \big [(s^2 + 4)A_n^2 + 3 (-1)^n \big ].
\end{eqnarray*}
The second relation can be proved by analogue calculation.
\end{proof}

Clearly, further identities can be proved in the same fashion as Proposition \ref{2nplus1A} is proved. Instead, we give a more elegant family of identities (\ref{JenningsA}) that generalize Proposition \ref{2nplus1A}. It follows as a corollary of the Theorem \ref{Jennings}.
\begin{cor} \label{corJennings}
For $m \in \mathbb{N}$ and the sequence of numbers $(A_n)_{n \ge 0}$ we have
\begin{eqnarray} \label{JenningsA} 
A_{(2m+1)n} = \sum_{i=0}^m (-1)^{n(m+i)} (s^2 + 4)^{i} \frac{2m+1}{2i+1} \binom{m+i}{2i} A_{n}^{2i+1}.
\end{eqnarray} 
\end{cor}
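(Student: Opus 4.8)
The plan is to re-run the proof of Theorem \ref{Jennings} essentially verbatim, since the only sequence-specific input there was the pair of numerical facts $\gamma\delta = -1$ and $(\gamma-\delta)^2 = 8$ recorded in (\ref{gammaTimesDelta}) and (\ref{gammaPlusDelta}). First I would establish the Binet-type representation for the class $(A_n)$. The characteristic equation of the recurrence $a_{n+2} = s a_{n+1} + a_n$ (the case $t=1$) is $x^2 - sx - 1 = 0$, whose roots $\alpha = \frac{s+\sqrt{s^2+4}}{2}$ and $\beta = \frac{s-\sqrt{s^2+4}}{2}$ satisfy $\alpha\beta = -1$ and $\alpha - \beta = \sqrt{s^2+4}$. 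Together with the initial data $A_0 = 0$, $A_1 = 1$ this yields $A_n = \frac{\alpha^n - \beta^n}{\alpha-\beta}$, the exact analogue of (\ref{BinetForPell}).

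Next I would transcribe the body of Theorem \ref{Jennings} with $\gamma,\delta$ replaced by $\alpha,\beta$. Putting $x = \alpha^n$ and $y = \beta^n = (-1)^n/x$ (which uses only $\alpha\beta = -1$), the ratio $A_{pn}/A_n$ expands exactly as in (\ref{ratioPpnPn}); for odd $p = 2m+1$ it collapses into one of the telescoping forms (\ref{caseA}) or (\ref{caseB}) according to the residue of $p$ modulo $4$, with the sign pattern again controlled by the parity of $n$, since $\alpha\beta=-1$ is the sole fact invoked there. Applying the Jennings identities (\ref{JenningsLemma1}) and (\ref{JenningsLemma2}) then rewrites this as a polynomial in $x+\frac{1}{x}$ or $x-\frac{1}{x}$. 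The one place where the constant $8$ entered is the identity $\left(x \pm \frac{1}{x}\right) = (\alpha-\beta)A_n$, giving $\left(x \pm \frac{1}{x}\right)^2 = (\alpha-\beta)^2 A_n^2 = (s^2+4)A_n^2$; hence every occurrence of $8 = (\gamma-\delta)^2$ in the passage to (\ref{JenningsRelationFinal}) is replaced by $s^2+4$, so the factor $2^{3i} = 8^i$ becomes $(s^2+4)^i$ while all binomial and sign data are untouched. The same simplification that carried (\ref{JenningsRelationFinal}) into (\ref{JenningsRelation}) then delivers (\ref{JenningsA}).

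The argument is therefore almost pure bookkeeping, and I expect the main obstacle to be confirming that $s^2+4$ is genuinely the only sequence-dependent quantity in the whole derivation. In particular one must check that the relation $\alpha\beta = -1$ holds with the same value $-1$ as in the Pell case, so that the sign $(-1)^{n(m+i)}$ and the collapse into the two telescoping sums are unaffected and no further dependence on $s$ slips in. This is exactly what the normalization $t=1$ defining the class guarantees, which is why the corollary takes such a clean form.
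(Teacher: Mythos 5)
Your proposal is correct and follows exactly the route the paper takes: the authors likewise observe that $(A_n)$ has the Binet form $A_n=\frac{\alpha^n-\beta^n}{\alpha-\beta}$ with $\alpha\beta=-1$ and $\alpha-\beta=\sqrt{s^2+4}$ generalizing (\ref{gammaTimesDelta}) and (\ref{gammaPlusDelta}), and then rerun the proof of Theorem \ref{Jennings} with $8=(\gamma-\delta)^2$ replaced by $s^2+4$. Your write-up is in fact somewhat more careful than the paper's, which states the substitution without tracing it through the argument.
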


In order to prove Corollary \ref{corJennings} we use the fact that the closed form relation for the terms of sequence $(A_n)_{n \ge 0}$ is
\begin{eqnarray}
A_n= \frac{\alpha^n - \beta^n}{ \alpha - \beta},
\end{eqnarray}
where 
\begin{eqnarray*}
\alpha &=&  \frac{1}{2} (s + \sqrt{s^2 + 4})\\
\beta &=& \frac{1}{2} (s - \sqrt{s^2 + 4}).
\end{eqnarray*}
Furthermore, it holds
\begin{eqnarray*}
\alpha \cdot \beta &=&-1\\
\alpha - \beta &=& \sqrt{s^2 + 4},
\end{eqnarray*}
what generalize relations (\ref{gammaTimesDelta}) and (\ref{gammaPlusDelta}) in the proof of Theorem \ref{Jennings}. This completes the statement of the Corollary \ref{corJennings}.
 
Further generalizations and extensions of expressions presented in this work are also possible.

\end{document}